\numberwithin{equation}{section}
\newtheorem{theorem}{Theorem}[section]
\newtheorem{lemma}[theorem]{Lemma}
\newtheorem{prop}[theorem]{Proposition}
\newtheorem{remark}[theorem]{Remark}
\theoremstyle{definition}
\newtheorem{defn}[theorem]{Definition}
\numberwithin{equation}{section}
\begin{document}
\thispagestyle{empty}

\vspace*{1cm}

\begin{center}
	
	{\LARGE\bf Numerical range with respect to a family of projections} \\

	\vspace*{2cm}
	
	{\large Waed Dada \footnote{E-mail address: {\tt waed.dada@fernuni-hagen.de}} and Joachim Kerner \footnote{E-mail address: {\tt joachim.kerner@fernuni-hagen.de}} }
	
	\vspace*{5mm}
	
	Department of Mathematics and Computer Science\\
	FernUniversit\"{a}t in Hagen\\
	58084 Hagen\\
	Germany\\
	
		\vspace*{5mm}
	
{\large and }

	\vspace*{5mm}
	
{\large N. Erkur\c{s}un \footnote{E-mail address: {\tt naer@fa.uni-tuebingen.de}}}

\vspace*{5mm}

Department of Mathematics\\
Hacettepe University\\
Ankara\\
Turkey\\
\end{center}

\vfill

\begin{abstract}
	In this note we introduce the concept of the numerical range of a bounded linear operator with respect to a family of projections. We give a precise definition and elaborate on its connection to the classical numerical range as well as to generalisations such as the quadratic numerical range and block numerical range.
%The numerical range was defined by a lots of people and has been generalized in various ways. So we define in this paper a   family of orthogonal projections on a Hilbert space $\mathbb P$, then we choose the subset $\mathcal P \subseteq \mathbb P$, hence the numerical range of the operator $A$ with respect to $\mathcal P$,  $W_\mathcal P(A)$ is classical numerical range for k-dimensional orthogonal projections on a k-dimensional Hilbert space or equals to spectrum of $A$ if  dim $ H>k$ and for k-dimensional orthogonal projections. Moreover we define different cases of orthogonal projections like $\mathcal P_A$, the set of projections commuting with the operator $A$, then $W_{\mathcal P_A}(A)$ is a point spectrum of $A$. 
%For $ H= H_1\oplus H_2$ is a Hilbert space and let the set $\mathcal P_{2,S}$ be subset of $\mathcal P_2$ commuting with the S-orthogonal projections onto $ H_1$, then $W_{\mathcal P_{2,S}}(A)= W_{ H_1, H_2}(A)$ for dim $ H_1,  H_2 \ge 2$, where $W_{ H_1, H_2}(A)$ is a quadratic numerical range of $A$. \\
%We generalize the last equation for the k-splitting of a Hilbert space such that dim $ H_1+ \cdots+ H_{k}\ge k $, then for the set $\mathcal P_{k,S^\ast} \subseteq \mathcal P_k$ a family of orthogonal  projections $S_i$ onto $ H_i$, where $ i= 1, \cdots, k-1$, commuting with the operator $A$, we obtain $ W_{\mathcal P_{k,S^\ast}}(A) =W_{ H_1, \cdots,  H_k } (A)$, where $ W_{ H_1, \cdots, H_k } (A)$ is a block numerical range of $A$.
\vspace{0.2cm}

\textit{Keywords :} numerical range, orthogonal projection, spectrum, compact self-adjoint operator. 
\end{abstract}

\section{Introduction} 
The numerical range $W(A)$ of a bounded linear operator $A$ on a complex Hilbert space $H$ is 
\begin{equation}
W(A):=\{\langle Ax,x\rangle: \ x \in H\ ,\  \|x\|=1 \} \subset \mathbb{C}\ .
\end{equation}
Originally introduced for linear operators on $\mathbb{C}^n$ by Toeplitz \cite{to18} and Hausdorff \cite{ha19}, it was later extended to operators on Hilbert spaces by Stone \cite{st32}. Unlike the spectrum, the numerical range is a unitary invariant but in general not invariant under similarity transformations and hence provides additional information about the operator. Furthermore, since the numerical range is relatively easy to compute (at least in the matrix 
case), it became a useful tool in many applications \cite{gr97,sh03,tre03}. 

The following list contains the basic properties of the numerical range for operators $A \in \mathcal{L}(H)$:
\begin{enumerate}
\item $W(A) \subseteq \{ \lambda \in \mathbb C: |\lambda| \leq \| A\| \}$.
\item   $ W(U^\ast A U)=W(A) $ for  any unitary $U \in \mathcal{L}(H)$.
\item For a two-dimensional Hilbert space $H$, $W(A) \subset \mathbb{C}$ is a (possibly degenerate) ellipse.
\item If $H$ is finite-dimensional, then $W(A) \subset \mathbb{C}$ is compact.
\item $\sigma(A)\subseteq \overline{W(A)}$ and  $\sigma_p(A)\subseteq W(A)$. \quad (Spectral Inclusion)
\item $W(A) \subset \mathbb{C}$ is convex. \quad (Toeplitz-Hausdorff Theorem)
\end{enumerate} 
Furthermore, as shown in \cite{bg72}, if $A \in \mathcal{L}(H)$ is a compact operator, then $W(A)$ is closed if $0 \in W(A)$ and if, in addition, $A$ is self-adjoint, then $W(A)$ is the convex hull of the point spectrum of $A$.
\section{Preliminaries and definitions}
We define the numerical range of a linear bounded operator $A$ with respect to families of orthogonal projections and consider, for $k \in \mathbb{N}$,
\begin{equation}\begin{split}
\mathbb{P} &:=\{ P \in \mathcal L (H) : P\mbox{ orthogonal projection in}\; H \}\ ,
\end{split}
\end{equation}
\begin{equation}\begin{split}
\mathcal{P}_k &:=\{ P \in \mathbb{P}:\dim(\mathrm{ran}(P))=k \}\ ,
\end{split}
\end{equation}
as well as,
\begin{equation}\begin{split}
\mathcal P_A:=\{ P\in \mathbb{P}: PA=AP\ , \ \dim (\mathrm{ran}(P)) < \infty\} .
\end{split}
\end{equation}
\begin{prop} $\mathbb{P},\mathcal{P}_k \subset \mathcal{L}(H)$ are closed sets with respect to the operator norm.
\end{prop}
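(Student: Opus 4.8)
The plan is to characterise membership in $\mathbb{P}$ by the two algebraic conditions $P=P^\ast$ and $P=P^2$, and then to show that each is preserved under operator-norm limits. So suppose $(P_n)_{n\in\nz}\subset\mathbb{P}$ with $\|P_n-P\|\to 0$ for some $P\in\mathcal{L}(H)$. First I would verify self-adjointness: since the involution $A\mapsto A^\ast$ is isometric for the operator norm, $P_n^\ast\to P^\ast$; but $P_n^\ast=P_n\to P$, whence $P^\ast=P$. Next I would treat idempotency using that squaring is norm-continuous on bounded sets, via the elementary estimate $\|A^2-B^2\|\le(\|A\|+\|B\|)\,\|A-B\|$. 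Because every orthogonal projection has norm at most $1$, the sequence $(P_n)$ is uniformly bounded and $\|P\|\le 1$, so $P_n^2\to P^2$; since $P_n^2=P_n\to P$, this forces $P^2=P$. Combining the two limits, $P$ is a self-adjoint idempotent, i.e.\ $P\in\mathbb{P}$, which shows that $\mathbb{P}$ is closed.

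For $\mathcal{P}_k$ I would use the decomposition $\mathcal{P}_k=\mathbb{P}\cap\{\,P:\dim(\ran P)=k\,\}$ and argue that the rank condition survives in the limit. Given $(P_n)\subset\mathcal{P}_k$ with $P_n\to P$, the first part already yields $P\in\mathbb{P}$, so it remains to control the dimension of $\ran P$. The crucial ingredient, which I expect to be the main obstacle, is the \emph{stability of rank}: if two orthogonal projections $P,Q$ satisfy $\|P-Q\|<1$, then $\dim(\ran P)=\dim(\ran Q)$.

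I would prove this lemma by showing that $Q$ restricted to $\ran P$ is injective. Indeed, for $x\in\ran P$ one has $x-Qx=Px-Qx=(P-Q)x$, while the Pythagorean identity for the orthogonal projection $Q$, namely $\|x\|^2=\|Qx\|^2+\|(I-Q)x\|^2$, gives $\|(I-Q)x\|=\|x-Qx\|\le\|P-Q\|\,\|x\|$ and hence $\|Qx\|^2\ge\bigl(1-\|P-Q\|^2\bigr)\|x\|^2$. Since $\|P-Q\|<1$, this is strictly positive for $x\neq 0$, so $Q|_{\ran P}$ is injective; as the ranges are finite-dimensional, $\dim(\ran P)\le\dim(\ran Q)$, and interchanging the roles of $P$ and $Q$ yields equality.

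To conclude, I would choose $N$ so large that $\|P_N-P\|<1$; applying the rank-stability lemma with $Q=P_N$ then gives $k=\dim(\ran P_N)=\dim(\ran P)$, so that $P\in\mathcal{P}_k$. This proves that $\mathcal{P}_k$ is closed, completing the argument.
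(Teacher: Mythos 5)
Your proof is correct, and there is in fact nothing in the paper to compare it against: the proposition is stated there without any proof at all, so your argument fills a genuine gap rather than duplicating or diverging from an existing one. The route you choose is the natural one: characterise $\mathbb{P}$ as the set of self-adjoint idempotents, pass both identities through the norm limit (isometry of the involution for $P^\ast=P$, the estimate $\|A^2-B^2\|\le(\|A\|+\|B\|)\|A-B\|$ together with $\|P_n\|\le 1$ for $P^2=P$), and then handle $\mathcal{P}_k$ by the rank-stability lemma: $\|P-Q\|<1$ forces $\dim(\ran(P))=\dim(\ran(Q))$, proved via injectivity of $Q|_{\ran(P)}$ using the Pythagorean identity $\|x\|^2=\|Qx\|^2+\|(I-Q)x\|^2$. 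All of these steps are sound. One small point of phrasing: when you say ``as the ranges are finite-dimensional, $\dim(\ran(P))\le\dim(\ran(Q))$,'' you do not yet know at that stage that $\ran(P)$ is finite-dimensional (in the application $P$ is the limit, whose rank is a priori unknown); but this is harmless, because an injective linear map from $\ran(P)$ into the $k$-dimensional space $\ran(Q)$ already forces $\dim(\ran(P))\le k$ without any prior finiteness assumption. With that sentence tightened, the argument is complete.
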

%
%For each $P\in \mathcal P_k$ and for any orthonormal basis $\{f_i\}_{i=1}^k$ of $\mathrm{ran}(P)$, we can write 
%$$P x= \sum_{i=1}^k\langle x,f_i\rangle f_i \hspace{1cm} \forall x\in H.$$ 
%The question now what is the relation between, the answer is coming in the following definition
%The compression of an operator $A_p$ plays an important role in linking  the operator $A$ and the %projection $P$. In the following   we give its definition.
%Let $P\in \mathcal P_k$. Then for any orthonormal basis  $\{f_1,f_2,\cdots,f_k\}$ of $ \mathrm{ran}(P)$ we have
%$$P x=\langle x,f_1\rangle f_1+ \langle x,f_2\rangle f_2+\cdots \langle x,f_k\rangle f_k \hspace{1cm} \forall x\in H.$$
%
\begin{defn} For $A\in \mathcal L(H)$ and $P \in  \mathbb P$ we define an operator $A_P $ on $\mathrm{ran}(P)$ by 
$$A_P: \mathrm{ran}(P) \to \mathrm{ran}(P)\ ,\  \; x\mapsto A_Px:= PAx\ .$$
The relation between $A_P$ and $A$ can be expressed by $$ A_PP=PAP.$$
The operator $A_P$ is called the compression of $A$ to $\mathrm{ran}(P)$ and $A$ is called a dilation of $A_P$ to $H$.
\end{defn}

\begin{remark}\label{Remark1}
	We have  $ W(A_P)\subset W(A)$: For any $\lambda\in W(A_P)$ there exists $x\in \mathrm{ran}(P)$ with $\|x\|=1$, $Px=x$ and  
	$$\lambda= \langle A_P x,x\rangle= \langle PAPx,x\rangle= \langle APx,Px\rangle =\langle Ay,y\rangle,$$
	where $y:=Px$. Since $\|y\|=\|Px\|=\|x\|=1$ we conclude $\lambda \in W(A)$. 
\end{remark}
Due to the spectral inclusion (see $5.$ in the list above) Remark~\ref{Remark1} implies, in particular, that each $\lambda \in \sigma(A_P)$ is contained in $W(A)$ given that $\dim(\mathrm{ran}(P)) < \infty$. This motivates the following definition.  
\begin{defn}[Numerical range with respect to a family of projections]
Let $A\in \mathcal L(H)$ be a bounded operator and $\mathcal P\subseteq \mathbb P$. Then 
\begin{equation}\label{PNumericalRange}
W_{\mathcal P}(A) := \bigcup_{P\in \mathcal P} \sigma (A_P)
\end{equation}
is called the numerical range of $A$ with respect to the family of orthogonal projections $\mathcal P$ or $\mathcal P$-numerical range of $A$ for short.
\end{defn} 
\begin{remark}
	\begin{enumerate} 
		\item[]
		\item $W_{\mathcal P}(A)$ is in general not closed\ .
		\item $W_{\mathcal P}(A)\subset \{\lambda\in \mathbb{C}: |\lambda|\leq ||A|| \}$\ .
\item $W_{\mathcal P}(A^\ast)= \big (W_{\mathcal P}(A)\big)^\ast$\ .
%\item $W_{\mathcal P}(\alpha A+ \beta\mathds{1})= \alpha W_{\mathcal P}(A)+\beta$ for $\alpha, \beta \in \mathbb C$ and $\mathds{1}$ being the identity operator.
	
	\end{enumerate}
\end{remark}

\section{Main Results}
\subsection{Connection to the classical numerical range and the point spectrum}
The first result establishes the connection with the classical numerical range. For the proof note that, for each $P\in \mathcal P_k$ and for any orthonormal basis $\{f_i\}_{i=1}^k$ of $\mathrm{ran}(P)$, one has 
\begin{equation}\label{EquationProof}
P x= \sum_{i=1}^k\langle x,f_i\rangle f_i\  \quad  \forall x\in H.
\end{equation}
\begin{prop}\label{one_dim} For $A\in \mathcal L(H)$ we have  $W_{\mathcal P_1}(A)= W(A)$.
\end{prop}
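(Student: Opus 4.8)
The plan is to exploit the fact that every $P\in\mathcal P_1$ is a rank-one projection, so the compression $A_P$ lives on a one-dimensional space and its spectrum collapses to a single complex number that is precisely one of the defining values of $W(A)$. Concretely, I would fix $P\in\mathcal P_1$, choose a unit vector $f$ spanning $\mathrm{ran}(P)$, and apply the displayed formula \eqref{EquationProof} with $k=1$ to obtain $Px=\langle x,f\rangle f$ for all $x\in H$. Evaluating the compression on the single basis vector then gives $A_Pf=PAf=\langle Af,f\rangle f$.

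Since $\mathrm{ran}(P)=\kz f$ is one-dimensional, this computation shows that $A_P$ is nothing but multiplication by the scalar $\langle Af,f\rangle$, and hence $\sigma(A_P)=\{\langle Af,f\rangle\}$, a single point. (The inclusion $\sigma(A_P)\subset W(A)$ is of course also guaranteed abstractly by Remark~\ref{Remark1} together with spectral inclusion, but here the value is completely explicit.) Taking the union over all $P\in\mathcal P_1$ therefore yields $W_{\mathcal P_1}(A)=\{\langle Af,f\rangle : f\in H,\ \|f\|=1\}$, where $f$ ranges over all unit generators of rank-one projections.

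To finish, I would note that the correspondence between $\mathcal P_1$ and unit vectors is exactly what is needed: every unit vector $f\in H$ generates a projection $P\in\mathcal P_1$, namely the orthogonal projection onto $\kz f$, so every value $\langle Af,f\rangle$ with $\|f\|=1$ is realised. Comparing with the definition of the classical numerical range then gives $W_{\mathcal P_1}(A)=W(A)$.

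There is no serious obstacle here; the argument is essentially a direct computation. The only two points requiring a line of care are, first, that the scalar $\langle Af,f\rangle$ is invariant under the phase replacement $f\mapsto\ue^{\ui\theta}f$, so that it is genuinely a well-defined function of the projection $P$ rather than of the chosen generator; and second, the elementary fact that a bounded operator on a one-dimensional Hilbert space has spectrum equal to its single eigenvalue, which is what reduces $\sigma(A_P)$ to a point and makes the union over $\mathcal P_1$ coincide exactly with the set of values defining $W(A)$.
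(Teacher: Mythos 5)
Your proof is correct and takes essentially the same approach as the paper's: the central computation $A_Pf = PAf = \langle Af,f\rangle f$ via \eqref{EquationProof}, together with the correspondence between unit vectors and rank-one orthogonal projections, is exactly what the paper uses. The only cosmetic difference is that you identify $\sigma(A_P)$ outright as the singleton $\{\langle Af,f\rangle\}$ and obtain the set equality in one pass, whereas the paper organizes the same facts into two separate inclusions.
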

\begin{proof}
Let $\lambda \in W_{\mathcal P_1}(A)$. Then there exist $P\in \mathcal P_1$ and $f \in \mathrm{ran}(P)$ with $\|f\|=1$ such that $PAP f=\lambda f$. Therefore
\begin{equation}
\langle Af,f\rangle= \langle APf,Pf\rangle=\langle PAPf,f\rangle =\langle \lambda f,f\rangle=\lambda
\end{equation}
and hence $\lambda \in W(A)$.\\
If $\lambda \in W(A)$ then there exists $f\in H$ with $||f||=1$ such that $\lambda =\langle Af,f\rangle$. Let $P$ denote the orthogonal projection onto span\{f\}. Then, according to \eqref{EquationProof}, 
\begin{equation}
PAP f= PAf= \langle Af,f\rangle f = \lambda f
\end{equation}
and hence $\lambda \in \sigma(A_P)$. Thus $\lambda \in W_{\mathcal P_1}(A)$.
\end{proof}
\begin{remark}Lemma~\ref{one_dim} is interesting from the following point of view: In general the spectrum forms only a ``small'' subset of $W(A)$ (for example, think of a matrix $A \in \mathbb{C}^2$ for which the spectrum consists of two points whereas $W(A)$ is a (possibly degenerate) ellipse). However, by considering the union of all $\sigma(A_P)$ for $P \in \mathcal{P}_1$ instead, the whole classical numerical range is obtained by ``filling it up'' with spectral values.
\end{remark}
The following statement is a direct generalisation of Proposition~\ref{one_dim}.
\begin{lemma}\label{11}
For $A\in \mathcal L(H)$ and the family $\mathcal P_k$ with $k \in \mathbb{N}$ the following holds:
\begin{enumerate}
\item If dim $H= k$, then $W_{\mathcal P_k}(A)= \sigma(A).$
\item If dim $H> k$, then  
$W_{\mathcal P_k}(A)= W(A)$.% where $W(A)$ is the classical numerical range of $A$.
\end{enumerate}
\end{lemma}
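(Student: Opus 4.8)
The plan is to handle the two cases separately, in each describing the members of $\mathcal P_k$ explicitly enough to read off the spectra. For the first statement, I would observe that when $\dim H = k$ the only $k$-dimensional subspace of $H$ is $H$ itself, so $\mathcal P_k = \{\mathrm{id}_H\}$ reduces to the identity alone. Since then $A_{\mathrm{id}_H} = A$ acts on $\mathrm{ran}(\mathrm{id}_H) = H$, we obtain $W_{\mathcal P_k}(A) = \sigma(A_{\mathrm{id}_H}) = \sigma(A)$ at once.

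For the second statement the easy inclusion is $W_{\mathcal P_k}(A) \subseteq W(A)$. For every $P \in \mathcal P_k$ the operator $A_P$ acts on the finite-dimensional space $\mathrm{ran}(P)$, so its spectrum is pure point spectrum and the spectral inclusion (item~5 in the introductory list) gives $\sigma(A_P) = \sigma_p(A_P) \subseteq W(A_P)$; combining this with $W(A_P) \subseteq W(A)$ from Remark~\ref{Remark1} and taking the union over $P \in \mathcal P_k$ yields the claim.

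The substantial direction is $W(A) \subseteq W_{\mathcal P_k}(A)$, which generalises the one-dimensional construction of Proposition~\ref{one_dim}. Given $\lambda \in W(A)$, I would pick a unit vector $f$ with $\langle Af, f \rangle = \lambda$ and set $e_1 := f$. The aim is to enlarge $\{e_1\}$ to an orthonormal system $\{e_1, \dots, e_k\}$ spanning a subspace $M$ whose projection $P$ satisfies $A_P e_1 = \lambda e_1$, so that $\lambda \in \sigma(A_P)$. Using \eqref{EquationProof} we have $A_P e_1 = P A e_1 = \sum_{i=1}^k \langle A e_1, e_i \rangle e_i$, and since $\langle A e_1, e_1 \rangle = \lambda$, the requirement $A_P e_1 = \lambda e_1$ is equivalent to $\langle A e_1, e_i \rangle = 0$ for all $i = 2, \dots, k$. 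It therefore suffices to choose $e_2, \dots, e_k$ as an orthonormal system inside the orthogonal complement of $\mathrm{span}\{e_1, A e_1\}$.

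The one delicate point, and the main obstacle, is the dimension count ensuring such vectors exist. As $\mathrm{span}\{e_1, A e_1\}$ has dimension at most $2$, its orthogonal complement has dimension at least $\dim H - 2$, and the hypothesis $\dim H > k$, i.e. $\dim H \geq k + 1$, gives $\dim H - 2 \geq k - 1$, which is exactly the number of further orthonormal vectors required (the case $\dim H = \infty$ being automatic). Setting $M := \mathrm{span}\{e_1, \dots, e_k\}$ and letting $P$ be the orthogonal projection onto $M$, we get $P \in \mathcal P_k$ and $\lambda \in \sigma(A_P) \subseteq W_{\mathcal P_k}(A)$. This establishes the reverse inclusion and, together with the preceding paragraph, the equality $W_{\mathcal P_k}(A) = W(A)$.
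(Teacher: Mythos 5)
Your proof is correct and takes essentially the same approach as the paper: part 1 via $\mathcal P_k = \{\mathrm{id}_H\}$, and the hard inclusion of part 2 by extending the unit vector $f$ to an orthonormal system $\{e_1,\dots,e_k\}$ whose additional vectors lie in the orthogonal complement of $\mathrm{span}\{f, Af\}$, which is exactly the paper's construction; your explicit dimension count $\dim H - 2 \geq k-1$ makes precise a step the paper leaves implicit. The only cosmetic difference is that for $W_{\mathcal P_k}(A) \subseteq W(A)$ you invoke Remark~\ref{Remark1} together with the spectral inclusion, whereas the paper redoes the one-line computation $\langle Af,f\rangle = \langle PAPf,f\rangle = \lambda$ directly.
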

\begin{proof} 
$1.$ is a direct consequence of $\mathcal P_k=\{\mathfrak Id\}$ given dim $H= k$.\\
 Regarding $2.$, let $\lambda \in W(A)$ be given. Then there exists $f_0\in H$ with $||f_0||=1$ such that $\lambda =\langle Af_0,f_0\rangle$. Now take $f_1,f_2,\cdots,f_{k-1} \in H$ with $||f_i||=1$ such that $f_i \perp f_j$ , $i\neq j$, for $i,j=0,1,\cdots,k-1$ and $f_i\perp Af_0$ and $i=1,\cdots,k-1$.
Let $P$ be the orthogonal projection onto span$\{f_0,f_1,\cdots, f_{k-1} \}$ which is a $k$-dimensional subspace. Then, employing \eqref{EquationProof},
\begin{equation}
\begin{split}
PAP f_0=PAf_0 &= \langle Af_0,f_0\rangle f_0+ \langle Af_0,f_1\rangle f_1+\cdots +\langle Af_0,f_{k-1}\rangle f_{k-1} \\
&= \langle Af_0,f_0 \rangle f_0 \\
&= \lambda f_0\ ,
\end{split}
\end{equation}
i.e., $\lambda$ is an eigenvalue of $PAP$. Hence $\lambda \in W_{\mathcal P_k}(A)$.\\
Now take $\lambda \in W_{\mathcal P_k}(A)$. Then there exist $P\in \mathcal P_k$ and $f\in \mathrm{ran}(P)$ with  $\|f\|=1$ such that $PAP f= \lambda f$. Hence $\langle Af,f\rangle =\langle PAP f,f\rangle = \langle \lambda f,f\rangle = \lambda$, implying $\lambda \in W(A)$.
\end{proof}
%
%\begin{remark}
%Consider the family $\mathcal P_c$ containing all projections $P \in %\mathbb{P}$ of finite rank. Then $\mathcal P_c=\bigcup_{k} \mathcal P_k$. %Therefore, if $\dim H=\infty$, Lemma~\ref{11} implies that $W_{\mathcal %P_c}(A)=W(A)$. On the contrary, whenever $\dim H < \infty$, one concludes %that $W_{\mathcal P_c}(A)=\sigma(A) \cup W(A)$. 
%\end{remark}
%
In the next result we show how the family of projections $\mathcal{P}_A$ is related to the point spectrum of the operator $A$.
\begin{theorem}
For arbitrary $A\in \mathcal L(H)$ one has $W_{\mathcal P_{A}}(A) \subset \sigma _P(A)$. Furthermore, if $A$ is symmetric then $W_{\mathcal P_{A}}(A)=\sigma _P(A)$.
\end{theorem}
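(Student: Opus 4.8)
The plan is to establish the two inclusions separately. For the inclusion $W_{\mathcal{P}_A}(A)\subset\sigma_P(A)$, valid for arbitrary $A$, I would take $\lambda\in W_{\mathcal{P}_A}(A)$, so that $\lambda\in\sigma(A_P)$ for some $P\in\mathcal{P}_A$. The decisive point is that $\mathrm{ran}(P)$ is finite-dimensional by definition of $\mathcal{P}_A$; hence $A_P$ is an operator on a finite-dimensional space and its spectrum consists entirely of eigenvalues. Thus there is $f\in\mathrm{ran}(P)$, $f\neq 0$, with $A_Pf=\lambda f$, that is $PAf=\lambda f$ (using $Pf=f$). Now the commutation relation $PA=AP$ enters: since $Pf=f$ we obtain $PAf=APf=Af$, and therefore $Af=\lambda f$. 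As $f\neq 0$, this yields $\lambda\in\sigma_P(A)$.

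For the reverse inclusion under the assumption that $A$ is symmetric, I would take $\lambda\in\sigma_P(A)$ with a unit eigenvector $f$, $Af=\lambda f$, and let $P$ be the orthogonal projection onto $\mathrm{span}\{f\}$, so that $Px=\langle x,f\rangle f$. This $P$ has rank one, hence satisfies the finite-dimensionality requirement, and $A_Pf=PAf=\lambda Pf=\lambda f$ already shows $\lambda\in\sigma(A_P)$. What remains is to verify $P\in\mathcal{P}_A$, i.e. $PA=AP$, and this is exactly where symmetry is used. From $\lambda\langle f,f\rangle=\langle Af,f\rangle=\langle f,Af\rangle=\bar\lambda\langle f,f\rangle$ we get that $\lambda$ is real; and for every $x\in H$ symmetry gives $\langle Ax,f\rangle=\langle x,Af\rangle=\bar\lambda\langle x,f\rangle=\lambda\langle x,f\rangle$. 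Consequently $PAx=\langle Ax,f\rangle f=\lambda\langle x,f\rangle f=\langle x,f\rangle Af=A\big(\langle x,f\rangle f\big)=APx$, so $PA=AP$ and $\lambda\in W_{\mathcal{P}_A}(A)$.

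The main obstacle is precisely this verification that the rank-one projection onto an eigenvector belongs to $\mathcal{P}_A$, and it is here that symmetry is indispensable: for a general, non-normal $A$ the line $\mathrm{span}\{f\}$ need not be a reducing subspace, so there may be no finite-rank projection commuting with $A$ whose compression sees $\lambda$, and the reverse inclusion can then fail. The forward inclusion, by contrast, is essentially automatic once one combines the two structural facts exploited above, namely that on the finite-dimensional space $\mathrm{ran}(P)$ spectrum and point spectrum coincide, and that the commutation $PA=AP$ turns an eigenvector of $A_P$ into a genuine eigenvector of $A$.
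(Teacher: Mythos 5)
Your proof is correct and follows essentially the same route as the paper: the forward inclusion via the commutation relation $PA=AP$ turning an eigenvector of $A_P$ into an eigenvector of $A$ (with the finite-dimensionality of $\mathrm{ran}(P)$ guaranteeing that $\sigma(A_P)$ consists of eigenvalues), and the reverse inclusion via the rank-one projection onto an eigenvector, using symmetry to verify $PA=AP$. If anything, you are slightly more careful than the paper, which glosses over both the finite-dimensionality argument and the reality of $\lambda$ needed in the computation $\langle Ax,f\rangle=\lambda\langle x,f\rangle$.
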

\begin{proof}
	For arbitrary $A$, let $\lambda \in W_{\mathcal P_{A}}(A)$ be given. Then there exists $P\in \mathcal P_{A}$ and $0\ne f\in \mathrm{ran}(P)$ such that $PAPf=\lambda f$ and
	$PAPf= APPf=APf$, since $f\in \mathrm{ran}(P)$. We obtain $APf=Af=\lambda f$ 
	and hence $\lambda \in \sigma_P(A)$.
	
Now let $A$ be symmetric: Then, for $\lambda \in \sigma_P(A)$ there exists a normalised $f\in H$ such that $Af=\lambda f$. Now, choose $P$ to be the orthogonal projection onto $\text{span}\{f\}$. Applying $P$ to the eigenvalue equation directly yields $PAPf=\lambda Pf=\lambda f$ which shows that $\lambda$ is an eigenvalue to $PAP$. On the other hand, since $A$ is symmetric one has, $x \in H$,
\begin{equation}
PAx=\langle Ax,f\rangle f=\langle x,f\rangle \lambda f=APx\ .
\end{equation}
This implies that $P \in \mathcal{P}_A$ and consequently $\lambda \in W_{\mathcal P_{A}}(A)$.\\

\end{proof}
\subsection{Connection to the quadratic and block numerical range}
As defined in \cite{LangerSpectralDecomposition} (and discussed in detail in \cite{lmm01}), the quadratic numerical range of a $2 \times 2$-block operator matrix 
\begin{equation}\label{BlockOperator}
\mathcal{A}=\begin{pmatrix}
A & B \\
C & D
\end{pmatrix}\ ,
\end{equation}
with $\mathcal{A}$ acting as an operator on $H_1 \oplus H_2$, is the set of all eigenvalues of all $2\times 2$-matrices 
\begin{equation}
\mathcal{A}_{f,g}=\begin{pmatrix}
\langle Af,f\rangle & \langle Bg,f\rangle \\
\langle Cf,g\rangle & \langle Dg,g\rangle
\end{pmatrix}
\end{equation}
with $f \in H_1$, $g \in H_2$ and $\|f\|=\|g\|=1$. The quadratic numerical range of $\mathcal{A}$ will be denoted by $W_{H_1,H_2}(\mathcal{A})$. 

In order to relate the quadratic numerical range to a family of projections, one considers the set of all projections $P \in \mathcal{P}_2$ such that $\mathrm{ran}(P)$ has dimension two and is spanned by two vectors in $H_1 \oplus H_2$ of the form $F_1:=f_1 \oplus 0$, $F_2:=0 \oplus f_2$ with (non-zero) $f_1 \in H_1$ and $f_2 \in H_2$. We will denote this family of projections by $\mathcal{P}_{H_1,H_2}$.

For any such $P \in \mathcal{P}_{H_1,H_2}$ we obtain
\begin{equation}\begin{split}
\mathcal{A}_PF_1:=P\mathcal{A}F_1=\langle \mathcal{A}F_1,F_1\rangle F_1 +\langle \mathcal{A}F_1,F_2\rangle F_2\ ,
\end{split}
\end{equation}
and
\begin{equation}\begin{split}
\mathcal{A}_PF_2:=P\mathcal{A}F_2=\langle \mathcal{A}F_2,F_1\rangle F_1 +\langle \mathcal{A}F_1,F_2\rangle F_2\ .
\end{split}
\end{equation}
Accordingly, $\mathcal{A}_P$ can be represented by a $2\times2$ matrix  with respect to this basis as
\begin{equation}
\mathcal{A}_P:=\begin{pmatrix} \langle \mathcal{A}F_1,F_1\rangle &\langle \mathcal{A}F_1,F_2\rangle\\ \langle \mathcal{A}F_2,F_1\rangle &\langle \mathcal{A}F_2,F_2\rangle \end{pmatrix}\in M_{2\times 2}(\mathbb C).
\end{equation}
Furthermore, a direct calculation shows that 
\begin{equation}\label{EquationProofXXX}
\mathcal{A}_P=\begin{pmatrix} \langle Af_1,f_1\rangle &\langle Cf_1,f_2\rangle\\ \langle Bf_2,f_1\rangle &\langle Df_2,f_2\rangle \end{pmatrix}=\mathcal{A}^T_{f_1,f_2}.
\end{equation}
This allows us to establish the following result.
\begin{theorem}\label{Theorem2NumericalRange}
Let $H=H_1\oplus H_2$ be a Hilbert space and $\mathcal{A}\in \mathcal L(H)$ a block operator of the form \eqref{BlockOperator}. Then 
\begin{equation}
W_{\mathcal P_{H_1,H_2}}(\mathcal{A})= W_{H_1,H_2}(\mathcal{A})\ .
\end{equation}
\end{theorem}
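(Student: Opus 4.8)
The plan is to reduce the asserted identity to the matrix representation already obtained in \eqref{EquationProofXXX}, combined with the elementary fact that a square matrix and its transpose have the same spectrum. Indeed, the substantive computation has already been carried out in the derivation of \eqref{EquationProofXXX}; what remains is to organise the correspondence between the projections in $\mathcal{P}_{H_1,H_2}$ and the pairs of unit vectors parametrising $W_{H_1,H_2}(\mathcal{A})$, and then to take unions.

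First I would make the parametrisation of $\mathcal{P}_{H_1,H_2}$ explicit. By definition, every $P \in \mathcal{P}_{H_1,H_2}$ is the orthogonal projection onto a two-dimensional subspace spanned by $F_1 = f_1 \oplus 0$ and $F_2 = 0 \oplus f_2$ with non-zero $f_1 \in H_1$, $f_2 \in H_2$. Since the projection depends only on the span, I may normalise and take $\|f_1\| = \|f_2\| = 1$; then $\{F_1, F_2\}$ is an orthonormal basis of $\mathrm{ran}(P)$, orthogonality being automatic because $F_1$ and $F_2$ lie in the orthogonal summands $H_1$ and $H_2$. This is exactly what justifies using \eqref{EquationProof} in the preliminary computation, so that $\mathcal{A}_P$ is genuinely represented by a $2\times 2$ matrix. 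Conversely, each pair of unit vectors $(f_1,f_2) \in H_1 \times H_2$ determines such a projection, and the unit pairs $(f_1,f_2)$ here range over precisely the vectors used to define $W_{H_1,H_2}(\mathcal{A})$. The pair attached to a given $P$ is determined only up to scalar phases, but a short check shows this merely conjugates $\mathcal{A}_{f_1,f_2}$ by a diagonal matrix and so leaves its spectrum unchanged.

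Next I would invoke \eqref{EquationProofXXX}, which identifies the compression $\mathcal{A}_P$, expressed in the orthonormal basis $\{F_1,F_2\}$, with the transpose $\mathcal{A}^T_{f_1,f_2}$ of the quadratic-numerical-range matrix. The key observation is then purely linear-algebraic: for every $M \in M_{2\times 2}(\mathbb{C})$ one has $\det(M - \lambda I) = \det(M^T - \lambda I)$, so $M$ and $M^T$ share their characteristic polynomial and hence their spectrum. Applying this with $M = \mathcal{A}_{f_1,f_2}$ yields $\sigma(\mathcal{A}_P) = \sigma(\mathcal{A}^T_{f_1,f_2}) = \sigma(\mathcal{A}_{f_1,f_2})$ for the projection $P$ corresponding to the pair $(f_1,f_2)$.

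Finally I would take unions along this correspondence. Writing $W_{\mathcal{P}_{H_1,H_2}}(\mathcal{A}) = \bigcup_{P} \sigma(\mathcal{A}_P)$ and $W_{H_1,H_2}(\mathcal{A}) = \bigcup_{f,g} \sigma(\mathcal{A}_{f,g})$, the equality $\sigma(\mathcal{A}_P) = \sigma(\mathcal{A}_{f_1,f_2})$ established termwise shows that both unions run over the same index set and agree entry by entry, giving $W_{\mathcal{P}_{H_1,H_2}}(\mathcal{A}) = W_{H_1,H_2}(\mathcal{A})$. I do not anticipate a serious obstacle: the only points needing care are the normalisation in the first step and the verification that the correspondence between projections and unit pairs is onto in both directions, which is handled by the parametrisation above.
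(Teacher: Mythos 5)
Your proposal is correct and follows essentially the same route as the paper's own proof: both parametrise the projections in $\mathcal{P}_{H_1,H_2}$ by pairs of unit vectors, identify the compression $\mathcal{A}_P$ with $\mathcal{A}^T_{f_1,f_2}$ via \eqref{EquationProofXXX}, and conclude using the fact that a matrix and its transpose share the same spectrum. You actually spell out two points the paper leaves implicit --- the transpose-spectrum step and the phase ambiguity in associating a unit-vector pair to a projection --- but this is elaboration of the same argument, not a different one.
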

\begin{proof}
For $\lambda \in W_{H_1,H_2}(\mathcal{A})$ and by definition of the quadratic numerical range there exist (normalised) $f_1\in H_1$, $f_2\in H_2$ and a (non-zero) $h\in\mathbb C^2$ such that $\mathcal{A}_{f_1,f_2} h=\lambda h$. Let $P$ denote the orthogonal projection onto $\mathrm{span}\{f_1\oplus 0, 0 \oplus f_2\}$. Then $P\in \mathcal P_{H_1,H_2}$ and, according to \eqref{EquationProofXXX}, $\sigma(\mathcal{A}_P)=\sigma(\mathcal{A}^T_{f_1,f_2})$ which implies $\lambda \in W_{\mathcal P_{H_1,H_2}}(\mathcal{A})$.

Now, let $\lambda \in W_{\mathcal P_{H_1,H_2}}(\mathcal{A})$ be given. Then there exists a projection $P \in  \mathcal P_{H_1,H_2}$ and a (non-zero) element $h \in \mathrm{ran}(P)$ such that $\mathcal{A}_Ph=\lambda h$. Employing relation \eqref{EquationProofXXX} again yields $\lambda \in W_{H_1,H_2}(\mathcal{A})$. Note that the existence of corresponding (normalised) vectors $f_1 \in H_1$, $f_2 \in H_2$ follows from the definition of the family $\mathcal{P}_{H_1,H_2}$.
\end{proof}
Theorem~\ref{Theorem2NumericalRange} can be directly generalised to $k$-block operators $\mathcal{A}$ acting on a Hilbert space of the form $H=\bigoplus_{i=1}^k H_i$ by defining the family $\mathcal{P}_{H_1,...,H_k}$ of projections in analogy to the case of $k=2$. Furthermore, in analogy to the quadratic numerical range one introduces the \textit{block numerical range} $W_{H_1,...,H_k}(\mathcal{A})$ (see also \cite{tre03,tre08}) and can obtain the following result.
\begin{theorem}\label{TheoremBlockNumericalRange}
	Let $H=\bigoplus_{i=1}^k H_i$ be a Hilbert space and $\mathcal{A}\in \mathcal L(H)$ a block operator on $H$, i.e., $(\mathcal{A})_{1\leq i,j\leq k}=A_{ij}$ with $A_{ij}:H_j \rightarrow H_i$ bounded linear operators. 
	Then 
	\begin{equation}
	W_{\mathcal P_{H_1,...,H_k}}(\mathcal{A})= W_{H_1,...,H_k}(\mathcal{A})\ .
	\end{equation}
\end{theorem}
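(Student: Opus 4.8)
The plan is to reproduce the argument of Theorem~\ref{Theorem2NumericalRange} essentially verbatim, the only new ingredient being the $k$-dimensional analogue of the matrix identity \eqref{EquationProofXXX}. First I would fix a projection $P\in\mathcal P_{H_1,\ldots,H_k}$ and write $\mathrm{ran}(P)=\mathrm{span}\{F_1,\ldots,F_k\}$, where $F_i:=0\oplus\cdots\oplus f_i\oplus\cdots\oplus 0$ carries a non-zero $f_i\in H_i$ in its $i$-th slot, normalised so that $\|f_i\|=1$. Because the summands $H_i$ are mutually orthogonal, the $F_i$ are automatically orthonormal; hence $\{F_i\}_{i=1}^k$ is an orthonormal basis of $\mathrm{ran}(P)$, the subspace is genuinely $k$-dimensional, and \eqref{EquationProof} is available.

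The core step is to compute the matrix of the compression $\mathcal{A}_P=P\mathcal{A}P$ in this basis. Using $PF_i=F_i$ and $P^\ast=P$, its $(i,j)$-entry equals $\langle \mathcal{A}F_j,F_i\rangle$. The block structure $(\mathcal{A})_{ij}=A_{ij}$ gives $\mathcal{A}F_j=\bigoplus_{i}A_{ij}f_j$, so that $\langle\mathcal{A}F_j,F_i\rangle=\langle A_{ij}f_j,f_i\rangle$. This is precisely the $k\times k$ version of \eqref{EquationProofXXX}, i.e. $\mathcal{A}_P=\mathcal{A}^T_{f_1,\ldots,f_k}$. The decisive consequence, which is all the proof actually needs, is the spectral identity $\sigma(\mathcal{A}_P)=\sigma(\mathcal{A}_{f_1,\ldots,f_k})$, valid since a finite square matrix and its transpose share the same eigenvalues.

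With this identity in hand both inclusions are immediate and formally identical to the $k=2$ case. For $W_{H_1,\ldots,H_k}(\mathcal{A})\subseteq W_{\mathcal P_{H_1,\ldots,H_k}}(\mathcal{A})$, given $\lambda\in\sigma(\mathcal{A}_{f_1,\ldots,f_k})$ with normalised $f_i$, let $P$ be the orthogonal projection onto $\mathrm{span}\{F_1,\ldots,F_k\}$; then $P\in\mathcal P_{H_1,\ldots,H_k}$ and the spectral identity gives $\lambda\in\sigma(\mathcal{A}_P)\subseteq W_{\mathcal P_{H_1,\ldots,H_k}}(\mathcal{A})$. Conversely, any $\lambda\in W_{\mathcal P_{H_1,\ldots,H_k}}(\mathcal{A})$ lies in $\sigma(\mathcal{A}_P)$ for some $P\in\mathcal P_{H_1,\ldots,H_k}$; reading off the generating vectors $f_i$ of $\mathrm{ran}(P)$ and normalising them, the same identity yields $\lambda\in\sigma(\mathcal{A}_{f_1,\ldots,f_k})\subseteq W_{H_1,\ldots,H_k}(\mathcal{A})$.

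I do not anticipate a genuine obstacle, since the structure is identical to Theorem~\ref{Theorem2NumericalRange}; the only points demanding care are bookkeeping. One must verify the matrix identity and keep track of the transpose so that the equality $\sigma(\mathcal{A}_P)=\sigma(\mathcal{A}_{f_1,\ldots,f_k})$ is correctly justified, and one should record explicitly that the mutual orthogonality of the $H_i$ is exactly what makes the $F_i$ an orthonormal basis and forces $\dim(\mathrm{ran}(P))=k$, so that the projection $P$ constructed in the first inclusion indeed belongs to $\mathcal P_{H_1,\ldots,H_k}$.
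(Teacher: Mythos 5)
Your proposal is correct and takes essentially the same approach as the paper: the paper gives no separate proof of Theorem~\ref{TheoremBlockNumericalRange}, asserting only that it is the ``direct generalisation'' of Theorem~\ref{Theorem2NumericalRange}, and your argument is precisely that generalisation (the $k\times k$ analogue of \eqref{EquationProofXXX} followed by the same two inclusions). One cosmetic slip: with the column convention $\langle \mathcal{A}F_j,F_i\rangle$ you adopt, the compression matrix equals $\mathcal{A}_{f_1,\ldots,f_k}$ itself rather than its transpose (the transpose in \eqref{EquationProofXXX} stems from the paper's row-wise convention), but since a matrix and its transpose have the same spectrum, the identity $\sigma(\mathcal{A}_P)=\sigma(\mathcal{A}_{f_1,\ldots,f_k})$ you actually use is correct either way.
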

\begin{remark} Regarding Theorem~\ref{Theorem2NumericalRange} and Theorem~\ref{TheoremBlockNumericalRange} we observe the following: If $A$ is a $n\times n$-matrix acting on $\mathbb{C}^n$, we can divide it into blocks as to obtain a $k$-block operator acting on $\mathbb{C}^{n_1}\oplus ... \oplus \mathbb{C}^{n_k}$ with $n_1+...+n_k=n$. Also, dividing each $\mathbb{C}^{n_j}$ further and hence obtaining a refined partition of $\mathbb{C}^n$ yields a $p$-block operator acting on $\mathbb{C}^{\tilde{n}_1}\oplus ... \oplus \mathbb{C}^{\tilde{n}_p}$ with $\tilde{n}_1+...+\tilde{n}_p=n$, $p > k$. Denoting all three operators by $A$, the definition of the families $\mathcal{P}_{H_1,...,H_k}$ from above allows us to obtain the inclusion 
	\begin{equation}
	W_{\mathcal P_{\mathbb{C}^{\tilde{n}_1},...,\mathbb{C}^{\tilde{n}_p}}}(A) \subseteq W_{\mathcal P_{\mathbb{C}^{n_1},...,\mathbb{C}^{n_k}}}(A)\subseteq W_{\mathcal P_{1}}(A)=W(A)\ ,
	\end{equation}
where the last equality is due to Proposition~\ref{one_dim}. Regarding the block numerical ranges we therefore obtain the inclusion
	\begin{equation}\label{EquationTretterInclusion}
	W_{\mathbb{C}^{\tilde{n}_1},...,\mathbb{C}^{\tilde{n}_p}}(A) \subseteq W_{{\mathbb{C}^{n_1},...,\mathbb{C}^{n_k}}}(A)\subseteq W(A)\ .
	\end{equation}
It is interesting to note that equation~\eqref{EquationTretterInclusion} was already obtained in \cite{tre03} by different methods.
\end{remark}
\vspace{1cm}

%\noindent {\bf Acknowledgments} The authors are very grateful to Rainer Nagel for reading the manuscript and for providing useful comments. 

\newpage
\small{

}

\end{document}